\newtheorem{proposition}{Proposition}[section]
\newtheorem{theorem}[proposition]{Theorem}
\newtheorem{lemma}[proposition]{Lemma}
\theoremstyle{definition}
\newtheorem{definition}[proposition]{Definition}
\title{Product theorem on delta invariants via adding a general boundary}
\author{Chuyu Zhou}
\address{\'Ecole Polytechnique F\'ed\'erale de Lausanne (EPFL), MA C3 615, Station 8, 1015 Lausanne, Switzerland}
\email{chuyu.zhou@epfl.ch}
\date{} 
\newcommand{\ord}{{\rm {ord}}}
\newcommand{\vol}{{\rm {vol}}}
\newcommand{\lct}{{\rm {lct}}}
\newcommand{\dt}{{\rm {dt}}}
\newcommand{\Ric}{{\rm {Ric}}}
\newcommand{\bC}{\mathbb{C}}
\newcommand{\bQ}{\mathbb{Q}}
\begin{document}

\begin{abstract}
It's well-known that adding a general boundary would create K-stability. As an application, we reprove product theorem for delta invariants of Fano varieties.
\end{abstract}

\maketitle
\tableofcontents

\section{Introduction}

It's well known that a K-unstable Fano manifold cannot admit K$\rm{\ddot{a}}$hler-Einstein (KE) metrics. However, it can admit conic KE metrics along 
some smooth divisors. This inspires the concept of twisted K-stability.  In \cite{Der16},  Dervan introduced the concept of twisted K-stability and later it is algebraically reformulated using twisted generalized Futaki invariants in \cite{BLZ19}, which indicates that K-unstable Fano manifolds can be twisted K-stable. For example, if $X$ is a given K-unstable Fano manifold and $H\in |-lK_X|$ is a general smooth divisor on $X$, where $-lK_X$ is a very ample line bundle, then the pair $(X,\frac{1-\beta}{l}H)$ is uniformly K-stable for sufficiently small $0<\beta\ll 1$.  Thus there exists a conic KE metric $w(\beta)$ such that the following equation holds:
\begin{align*}
\Ric(w(\beta))= \beta w(\beta) + (1-\beta) [H].
\end{align*}

The following theorem proved by \cite{BL18b} are natural products of this phenomenon, that is, adding a general boundary would create K-stability.

\begin{theorem}{\rm (\cite[Section 7]{BL18b})}\label{kus}
Let $(X,\Delta)$ be a log Fano pair, then the following statements are equivalent.
\begin{enumerate}
\item $\delta(X,\Delta)\geq \mu$ for some positive $0<\mu\leq 1$,
\item For any rational $0<\epsilon <\mu$, there exists an element $D \in |-K_X-\Delta|_\bQ$ such that the pair $(X, \Delta+(1-\epsilon)D)$ is K-semistable,
\item For any rational $0<\epsilon <\mu$, there exists an element $D \in |-K_X-\Delta|_\bQ$ such that the pair $(X, \Delta+(1-\epsilon)D)$ is uniformly K-stable.

\end{enumerate}

\end{theorem}

As an application of this principle, we reprove the following product theorem for delta invariants of Fano varieties, which is originally proved by \cite{zhuang19}.

\begin{theorem}\label{corollary}
Let $(X_i,\Delta_i), i=1,2,$ be two log Fano pairs.
\begin{enumerate}
\item Both pairs are K-semistable if and only if $(X_1\times X_2,p_1^*\Delta_1+p_2^*\Delta_2)$ is K-semistable,
\item If one of the two pairs is K-unstable, then 
$$\delta(X_1\times X_2, p_1^*\Delta_1+p_2^*\Delta_2)=\min\{\delta(X_1,\Delta_1), \delta(X_2,\Delta_2)\}.$$
\item In general, we have following inequality,
$$\delta(X_1\times X_2, p_1^*\Delta_1+p_2^*\Delta_2)\geq  \min\{\delta(X_1,\Delta_1), \delta(X_2,\Delta_2), 1\}.$$
\end{enumerate}
\end{theorem}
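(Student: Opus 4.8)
Throughout I write $X=X_1\times X_2$, $\Delta=p_1^*\Delta_1+p_2^*\Delta_2$, $L_i=-K_{X_i}-\Delta_i$ and $L=-K_X-\Delta$. Since $K_X+\Delta=p_1^*(K_{X_1}+\Delta_1)+p_2^*(K_{X_2}+\Delta_2)$, we have $L=p_1^*L_1+p_2^*L_2$ and $(X,\Delta)$ is again log Fano. The plan is to use Theorems \ref{kss} and \ref{kus} to trade the assertions, which concern K-semistability and the $\delta$-invariant, for statements about log Fano pairs obtained by adding a general boundary; the role of the boundary is to upgrade each factor (K-semistable, or merely with $\delta\ge\mu$) into a \emph{uniformly} K-stable pair, a class for which the product behaviour is accessible. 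All three parts then follow by combining this with one elementary inequality.

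First I would record the upper bound $\delta(X,\Delta)\le\min\{\delta(X_1,\Delta_1),\delta(X_2,\Delta_2)\}$. For a divisorial valuation $\ord_{E_1}$ on $X_1$, its pullback $\ord_{p_1^*E_1}$ (the order of vanishing along $E_1\times X_2$) satisfies $A_{X,\Delta}(p_1^*E_1)=A_{X_1,\Delta_1}(E_1)$, since the log discrepancy only sees the first factor. For the expected vanishing order I would use the K\"unneth identity $H^0(X,m(p_1^*M_1+p_2^*M_2))=H^0(X_1,mM_1)\otimes H^0(X_2,mM_2)$, which yields the product volume formula $\vol(p_1^*M_1+p_2^*M_2)=\binom{n_1+n_2}{n_1}\vol(M_1)\vol(M_2)$, where $n_i=\dim X_i$. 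Applying this to $L-t\,p_1^*E_1=p_1^*(L_1-tE_1)+p_2^*L_2$, the common factor $\binom{n_1+n_2}{n_1}\vol(L_2)$ cancels and gives $S_L(p_1^*E_1)=S_{L_1}(E_1)$, hence $A_{X,\Delta}(p_1^*E_1)/S_L(p_1^*E_1)=A_{X_1,\Delta_1}(E_1)/S_{L_1}(E_1)$. Taking the infimum over $E_1$, and symmetrically over $E_2$, gives the claimed upper bound.

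For the reverse inequality I would run the boundary trick twice. If both factors are K-semistable, i.e.\ $\delta(X_i,\Delta_i)\ge1$, then by Theorem \ref{kss} I may choose $D_i\in|L_i|_\bQ$ with $(X_i,\Delta_i+(1-\epsilon)D_i)$ uniformly K-stable for every rational $0<\epsilon<1$, and set $D=p_1^*D_1+p_2^*D_2\in|L|_\bQ$. Since $-K_{X_i}-\Delta_i-(1-\epsilon)D_i=\epsilon L_i$ is ample, $(X,\Delta+(1-\epsilon)D)$ is precisely the product of the two uniformly K-stable log Fano pairs $(X_i,\Delta_i+(1-\epsilon)D_i)$, hence K-semistable by the product statement discussed below; the converse direction of Theorem \ref{kss} then forces $(X,\Delta)$ to be K-semistable, which together with the upper bound proves (1). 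The same argument with Theorem \ref{kus} replacing Theorem \ref{kss} handles the general bound: if $\mu:=\min\{\delta(X_1,\Delta_1),\delta(X_2,\Delta_2)\}<1$, then for each rational $0<\epsilon<\mu$ Theorem \ref{kus} supplies $D_i\in|L_i|_\bQ$ with $(X_i,\Delta_i+(1-(\mu-\epsilon))D_i)$ uniformly K-stable; the product pair $(X,\Delta+(1-(\mu-\epsilon))D)$ is then K-semistable, and the converse direction of Theorem \ref{kus} gives $\delta(X,\Delta)\ge\mu$. With the upper bound this yields $\delta(X,\Delta)=\min\{\delta(X_1,\Delta_1),\delta(X_2,\Delta_2)\}$ whenever $\mu<1$, which is (2); and (3) follows by splitting into $\mu\ge1$ (covered by (1)) and $\mu<1$ (covered by (2)).

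The crux — the single nontrivial input, used twice above — is the product statement: the product of two uniformly K-stable log Fano pairs is K-semistable. I would deduce this from the Yau--Tian--Donaldson correspondence for klt log Fano pairs: a uniformly K-stable pair admits a (singular) K\"ahler--Einstein metric, the orthogonal sum of the two metrics is K\"ahler--Einstein on the product, and existence of a K\"ahler--Einstein metric forces K-polystability, hence K-semistability. This is where the boundary trick earns its keep: a destabilizing valuation on $X_1\times X_2$ may mix the two factors and need not be a pullback, so the clean computation of the second paragraph does not by itself control the product, and reducing to the uniformly K-stable (equivalently, K\"ahler--Einstein) case is exactly what circumvents this. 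The main obstacle is therefore this product statement; a purely algebraic proof would instead require showing directly that such mixed valuations cannot destabilize a product of uniformly K-stable pairs, e.g.\ by degenerating an arbitrary valuation to a product monomial valuation adapted to $L=p_1^*L_1+p_2^*L_2$.
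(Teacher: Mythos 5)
Your proof is correct, and its engine is identical to the paper's. The paper proves this statement by combining Theorem \ref{kss product} with Theorem \ref{product formula}, and in both of those the hard direction is exactly your argument: use Theorem \ref{kss} (resp.\ Theorem \ref{kus}) to replace each factor by a uniformly K-stable perturbation $(X_i,\Delta_i+(1-\epsilon)D_i)$, invoke the Yau--Tian--Donaldson theorem (\cite{LTW19}) to produce singular K\"ahler--Einstein metrics on the factors, note that the product metric is K\"ahler--Einstein on the product pair, deduce K-semistability of the perturbed product from Berman's theorem (\cite{Ber16}), and conclude with the converse directions of Theorems \ref{kss} and \ref{kus}. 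Where you genuinely diverge is the reverse inequality $\delta(X_1\times X_2,p_1^*\Delta_1+p_2^*\Delta_2)\le\min\{\delta(X_1,\Delta_1),\delta(X_2,\Delta_2)\}$: you test $\delta$ against pullback divisorial valuations, computing $A_{X,\Delta}(E_1\times X_2)=A_{X_1,\Delta_1}(E_1)$ and $S(E_1\times X_2)=S(E_1)$ via the K\"unneth volume formula $\vol(p_1^*M_1+p_2^*M_2)=\binom{n_1+n_2}{n_1}\vol(M_1)\vol(M_2)$, whereas the paper works at the finite level $m$, observing (again by K\"unneth) that $p_1^*D^{(1)}_m+p_2^*D^{(2)}_m$ is an $m$-basis type divisor of the product, that log canonicity of the product pair descends to each factor, hence $\delta_m(X_i,\Delta_i)\ge\delta_m(X_1\times X_2,p_1^*\Delta_1+p_2^*\Delta_2)$, and then letting $m\to\infty$ via $\delta_m\to\delta$ (\cite{FO18,BJ20}). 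The trade-off is minor: your route avoids the approximation theorem $\delta_m\to\delta$ but requires the product volume identity and the identification of $S$ for pullback valuations, while the paper's needs only the elementary fact that log canonicity of a product pair forces log canonicity of the factors; both rest on K\"unneth. One remark: like the paper, you leave implicit the verification that the product of two singular K\"ahler--Einstein metrics on klt log Fano pairs is again K\"ahler--Einstein in the appropriate pluripotential sense; this is true, but it is the one analytic step that neither treatment spells out.
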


\noindent
\subsection*{Acknowledgement}
The author would like to thank Yuchen Liu and Ziquan Zhuang for beneficial comments.
The author is supported by grant European Research Council (ERC-804334).

\section{Preliminaries}

In this section, we present some necessary preliminaries. Throughout the note we work over complex number field $\bC$. A log pair $(X,\Delta)$ consists of a normal projective variety $X$ and an effective $\bQ$-divisor $\Delta$ on $X$ such that $K_X+\Delta$ is $\bQ$-Cartier. We say a log pair $(X,\Delta)$ is log Fano if the pair admits klt singularities and $-K_X-\Delta$ is ample. The $\bQ$-linear system $|-K_X-\Delta|_\bQ$ is defined as follows:
 $$|-K_X-\Delta|_\bQ:= \{ D\geq 0|  D\sim_\bQ -K_X-\Delta\}.$$

By the works of \cite{FO18, BJ20}, we can give the following definition for delta invariants of log Fano pairs.

\begin{definition}
Let $(X,\Delta)$ be a log Fano pair. We define
$$\delta_m(X,\Delta):=\inf_E \frac{A_{X,\Delta}(E)}{S_m(E)} \quad and \quad \delta(X,\Delta):=\inf_E\frac{A_{X,\Delta}(E)}{S_{X,\Delta}(E)},$$
where $E$ runs over all prime divisors over $X$.  Here 
$$A_{X,\Delta}(E)=\ord_E(K_Y-f^*(K_X+\Delta))+1 $$
for some log resolution $f: Y\to X$ such that $E\subset Y$; and
$$S_m(E)=\sup_{D_m} \ord_E(D_m), $$
where $D_m$ is of the form $\frac{\sum_j\{s_j=0\}}{m\dim H^0(X,-m(K_X+\Delta))}$ and $\{s_j\}_j$ is a complete basis of the vector space $H^0(X,-m(K_X+\Delta))$; and 
$$S_{X,\Delta}(E)=\frac{1}{\vol(-K_X-\Delta)}\int_0^\infty \vol(-f^*(K_X+\Delta)-tE)\dt .$$
\end{definition}

The following result is now well-known. 

\begin{theorem}{(\rm \cite{FO18,BJ20})}
Let $(X,\Delta)$ be a log Fano pair, then
\begin{enumerate}
\item $\lim_{m\to \infty} \delta_m(X,\Delta)=\delta(X,\Delta)$,
\item $(X,\Delta)$ is K-semistable if and only if $\delta(X,\Delta)\geq 1$,
\item $(X,\Delta)$ is uniformly K-stable if and only if $\delta(X,\Delta)> 1$.
\end{enumerate}

\end{theorem}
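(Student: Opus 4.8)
Write $L:=-K_X-\Delta$, which is ample, put $R_m:=H^0(X,mL)$ and $N_m:=\dim R_m$, and for a prime divisor $E$ over $X$ let $v:=\ord_E$ with associated filtration $\mF^\lambda R_m:=\{s\in R_m:\ v(s)\ge\lambda\}$. The first step is to pin down $S_m(E)$ exactly: a basis of $R_m$ compatible with $\mF^\bullet R_m$ simultaneously maximizes $\sum_j v(s_j)$ over all bases, so the supremum defining $S_m(E)$ is attained and
\begin{align*}
S_m(E)=\frac{1}{mN_m}\sum_{j\ge 1}\dim\mF^j R_m.
\end{align*}
Recognizing $\dim\mF^j R_m/N_m$ as the normalized dimension of the sub-linear-series of sections vanishing to order $\ge j$ along $E$, the theory of volumes of graded linear series (Okounkov bodies; Lazarsfeld--Musta\c{t}\u{a}, Boucksom--Chen) yields, for each fixed $E$, the Riemann-sum limit
\begin{align*}
S_m(E)\xrightarrow[m\to\infty]{}\frac{1}{\vol(L)}\int_0^\infty\vol(L-tE)\,\dt=S_{X,\Delta}(E).
\end{align*}
From this the bound $\limsup_m\delta_m\le\delta$ is immediate, since $\delta_m\le A_{X,\Delta}(E_0)/S_m(E_0)$ for any fixed $E_0$; letting $m\to\infty$ and then taking the infimum over $E_0$ gives it.

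The reverse inequality $\liminf_m\delta_m\ge\delta$ requires making the convergence $S_m(E)\to S_{X,\Delta}(E)$ \emph{uniform} in $E$, and this is the step I expect to be the main obstacle. The plan is to establish a uniform Riemann-sum estimate
\begin{align*}
\big|S_m(E)-S_{X,\Delta}(E)\big|\le\frac{C}{m}\,T(E),\qquad T(E):=\sup\{t:\vol(L-tE)>0\},
\end{align*}
with $C=C(X,\Delta)$ independent of $E$, the error being controlled by the total length $T(E)$ of the filtration and the mesh $1/m$. Combining this with the elementary concavity bound $S_{X,\Delta}(E)\ge T(E)/(\dim X+1)$, which follows from concavity of $t\mapsto\vol(L-tE)^{1/\dim X}$, converts the additive estimate into a multiplicative one $S_m(E)\le(1+\epsilon_m)\,S_{X,\Delta}(E)$ with $\epsilon_m\to 0$ uniformly. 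Then $A_{X,\Delta}(E)/S_m(E)\ge\delta/(1+\epsilon_m)$ for every $E$, and passing to the infimum gives $\delta_m\ge\delta/(1+\epsilon_m)\to\delta$, completing (1). (Alternatively, an Izumi-type inequality $T(E)\le C'A_{X,\Delta}(E)$ allows one to restrict the infimum to a weakly compact family of quasi-monomial valuations and to upgrade the pointwise limit to a uniform one there.)

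For (2) and (3) I would combine (1) with the valuative criterion for K-stability of Fujita and Li. Set $\beta_{X,\Delta}(E):=A_{X,\Delta}(E)-S_{X,\Delta}(E)$; up to the positive factor $\vol(L)$ this is the non-Archimedean Donaldson--Futaki invariant of the test configuration induced by $E$, and the criterion says that $(X,\Delta)$ is K-semistable if and only if $\beta_{X,\Delta}(E)\ge 0$ for every prime divisor $E$ over $X$. Since $\beta_{X,\Delta}(E)\ge 0$ is equivalent to $A_{X,\Delta}(E)/S_{X,\Delta}(E)\ge 1$, taking the infimum over $E$ shows K-semistability is equivalent to $\delta(X,\Delta)\ge 1$, giving (2). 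For (3) I would invoke the uniform form of the criterion (Fujita; Boucksom--Hisamoto--Jonsson): $(X,\Delta)$ is uniformly K-stable if and only if $\beta_{X,\Delta}(E)\ge\epsilon\,\|E\|$ for some fixed $\epsilon>0$ and all $E$, where $\|E\|$ is the non-Archimedean $J$-norm. Because $\|E\|$ is comparable to $S_{X,\Delta}(E)$ up to a dimensional constant, this is equivalent to the existence of $\epsilon'>0$ with $A_{X,\Delta}(E)\ge(1+\epsilon')S_{X,\Delta}(E)$ for all $E$, that is, to $\delta(X,\Delta)>1$, which is (3).
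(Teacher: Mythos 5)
First, a point of comparison: the paper does \emph{not} prove this theorem. It is quoted as background from \cite{FO18, BJ20}, and the author even remarks that one may take it as the definition of K-semistability and uniform K-stability; so there is no internal proof to measure your attempt against, and what you have written is a reconstruction of the cited arguments. Judged on that basis, much of it is sound: the identity $S_m(E)=\frac{1}{mN_m}\sum_{j\ge 1}\dim\mF^{j}R_m$ via filtration-compatible bases is correct, the pointwise limit $S_m(E)\to S_{X,\Delta}(E)$ via Okounkov bodies is correct, and hence so is $\limsup_m \delta_m\le \delta$. Parts (2) and (3) are exactly the Fujita--Li valuative criterion and Fujita's uniform version, and the comparison you invoke between the non-Archimedean norm and $S_{X,\Delta}(E)$ is legitimate, resting on the standard two-sided bound $\frac{1}{n+1}T(E)\le S_{X,\Delta}(E)\le \frac{n}{n+1}T(E)$.

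The genuine gap is precisely the step you flag: the uniform estimate $\bigl|S_m(E)-S_{X,\Delta}(E)\bigr|\le C\,T(E)/m$ is asserted, not proved, and a Riemann-sum argument cannot deliver it. $S_m(E)$ is a Riemann sum of the functions $t\mapsto \dim\mF^{\lceil tm\rceil}R_m/N_m$, not of the limit function $t\mapsto \vol(L-tE)/\vol(L)$; the convergence of the former to the latter comes from the asymptotic theory of graded linear series and carries no uniform-in-$E$ rate, so the mesh-$1/m$ heuristic controls only the harmless discretization error, not the dominant one. What is actually true, and what \cite{BJ20} prove, is a \emph{one-sided} uniform bound $S_m(E)\le (1+\epsilon_m)S_{X,\Delta}(E)$ (one side suffices for $\liminf_m\delta_m\ge\delta$), and its proof is not formal: it passes through asymptotic multiplier ideals of the graded sequence of valuation ideals of $\ord_E$, subadditivity, Nadel vanishing plus an effective global-generation statement to bound $\dim\mF^{\lambda}R_m$ with an additive error controlled by $A_{X,\Delta}(E)$ and a fixed constant, and only then uses positivity of the alpha invariant (your Izumi-type inequality $T(E)\le A_{X,\Delta}(E)/\alpha$) together with $S_{X,\Delta}(E)\ge T(E)/(n+1)$ to convert the additive error into a multiplicative one. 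Your parenthetical alternative has the same defect in different clothing: $S$ and $S_m$ are not continuous on the space of valuations, so ``restricting the infimum to a weakly compact family of quasi-monomial valuations'' requires the semicontinuity and generic-limit machinery of \cite{BJ20} (or boundedness of complements, as in later work of Blum, Liu and Xu), none of which is supplied. In short: your architecture is the correct one --- indeed it is the architecture of the cited papers --- but the analytic heart of part (1) is missing.
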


One can just put this theorem as the definition of K-semistability (resp. uniform K-stability) of $(X,\Delta)$.

\section{Product theorem for K-stability}

In this section, we will prove Theorem \ref{corollary}. We first show the following result on K-stability of product varieties.
 
\begin{theorem}\label{kss product}
Let $(X_i,\Delta_i), i=1,2,$ be two log Fano pairs, then
$(X_1\times X_2, p_1^*\Delta_1+p_2^*\Delta_2)$ is K-semistable if and only if both pairs are K-semistable.
\end{theorem}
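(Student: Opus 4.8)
The plan is to deduce Theorem~\ref{kss product} from the ``adding a general boundary'' criterion of Theorem~\ref{kss}, which lets us trade K-semistability for uniform K-stability, and then to exploit the fact that uniform K-stability is governed by K\"ahler--Einstein metrics, which multiply well under products. Throughout write $Y:=X_1\times X_2$, $\Delta_Y:=p_1^*\Delta_1+p_2^*\Delta_2$, and $n_i:=\dim X_i$. First I would record that $(Y,\Delta_Y)$ is itself log Fano: it is klt because a product of klt pairs is klt, and $-K_Y-\Delta_Y=p_1^*(-K_{X_1}-\Delta_1)+p_2^*(-K_{X_2}-\Delta_2)$ is ample as a sum of pullbacks of ample classes.

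The reverse implication (product K-semistable $\Rightarrow$ each factor K-semistable) I would prove by contraposition and expect to be the routine half. Suppose, say, $\delta(X_1,\Delta_1)<1$ and pick a prime divisor $E$ over $X_1$ with $A_{X_1,\Delta_1}(E)/S_{X_1,\Delta_1}(E)<1$. Extracting $E$ by $f_1\colon Y_1\to X_1$ and base-changing by $X_2$, the divisor $E\times X_2$ is prime over $Y$. The two invariants are inherited from the first factor: $A_{Y,\Delta_Y}(E\times X_2)=A_{X_1,\Delta_1}(E)$ since $K_Y$ and $\Delta_Y$ split as pullbacks, and $S_{Y,\Delta_Y}(E\times X_2)=S_{X_1,\Delta_1}(E)$. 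The latter follows from the multiplicativity of volumes for products,
\begin{align*}
\vol\big(p_1^*L_1+p_2^*L_2\big)=\binom{n_1+n_2}{n_1}\vol(L_1)\cdot\vol(L_2),
\end{align*}
applied to $L_1=-f_1^*(K_{X_1}+\Delta_1)-tE$ and $L_2=-(K_{X_2}+\Delta_2)$, the binomial factor and the $\vol(L_2)$ factor cancelling in the quotient defining $S$. Hence $\delta(Y,\Delta_Y)\leq A_{Y,\Delta_Y}(E\times X_2)/S_{Y,\Delta_Y}(E\times X_2)<1$, so $(Y,\Delta_Y)$ is not K-semistable.

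For the forward implication I would argue as follows. Assume both $(X_i,\Delta_i)$ are K-semistable. By Theorem~\ref{kss} there are $D_i\in|-K_{X_i}-\Delta_i|_\bQ$ with $(X_i,\Delta_i+(1-\epsilon)D_i)$ uniformly K-stable for every rational $0<\epsilon<1$. Uniform K-stability of a log Fano pair yields a (singular, conic along the boundary) K\"ahler--Einstein metric $\omega_i$ by the Yau--Tian--Donaldson correspondence in the uniform regime. The product metric $p_1^*\omega_1+p_2^*\omega_2$ is then K\"ahler--Einstein for the product pair, since Ricci curvature is block-diagonal on a product and the Einstein constant equals $1$ for both factors; consequently $(Y,\,p_1^*(\Delta_1+(1-\epsilon)D_1)+p_2^*(\Delta_2+(1-\epsilon)D_2))$ is K-polystable, in particular K-semistable. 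Setting $D:=p_1^*D_1+p_2^*D_2\in|-K_Y-\Delta_Y|_\bQ$, this says precisely that $(Y,\Delta_Y+(1-\epsilon)D)$ is K-semistable for all rational $0<\epsilon<1$, and Theorem~\ref{kss} (the implication (2)$\Rightarrow$(1) applied on $Y$) then gives that $(Y,\Delta_Y)$ is K-semistable.

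I expect the main obstacle to be the forward direction, and specifically the conceptual point that K-semistable pairs need not carry K\"ahler--Einstein metrics, so one cannot take a product metric directly; the role of Theorem~\ref{kss} is exactly to perturb into the uniformly K-stable regime where such a metric exists, after which the product argument is transparent and the perturbation is removed in the limit $\epsilon\to 0$. The analytic inputs --- existence of a K\"ahler--Einstein metric from uniform K-stability, and its preservation under products --- are the engine of the proof; the only delicate bookkeeping is checking that the Einstein constants and K\"ahler classes match on the nose, so that $[p_1^*\omega_1+p_2^*\omega_2]=c_1(-K_Y-\Delta_Y)$. By contrast the reverse direction reduces to the volume product formula and is essentially formal.
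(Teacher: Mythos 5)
Your proposal is correct, and its harder half coincides with the paper's: for the direction ``both factors K-semistable $\Rightarrow$ product K-semistable'' the paper does exactly what you do --- perturb each factor into the uniformly K-stable regime via Theorem~\ref{kss}(3) with a fixed $D_i\in|-K_{X_i}-\Delta_i|_\bQ$, invoke the uniform Yau--Tian--Donaldson correspondence (the paper cites \cite{LTW19}) to get KE metrics, take the product metric, conclude K-polystability of the perturbed product pair by \cite{Ber16}, and remove the perturbation with Theorem~\ref{kss}(2). Where you genuinely diverge is the other direction. The paper argues via basis type divisors: if $D^{(i)}_m$ are $m$-basis type divisors for $(X_i,\Delta_i)$, then $p_1^*D^{(1)}_m+p_2^*D^{(2)}_m$ is an $m$-basis type divisor for the product, and log canonicity of the product pair with coefficient $\delta_m(Y,\Delta_Y)$ restricts to each factor, giving $\delta_m(X_i,\Delta_i)\geq\delta_m(Y,\Delta_Y)$ and then the claim in the limit. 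You instead pull back a single destabilizing divisor: given $E$ over $X_1$ with $A/S<1$, you show $E\times X_2$ over $Y$ satisfies $A_{Y,\Delta_Y}(E\times X_2)=A_{X_1,\Delta_1}(E)$ and $S_{Y,\Delta_Y}(E\times X_2)=S_{X_1,\Delta_1}(E)$, the latter via the K\"unneth-based product formula for volumes of big $\bQ$-divisors (valid beyond the nef case precisely because $H^0$ of a box product splits as a tensor product). Both routes are sound. Your valuation-theoretic argument is arguably cleaner for this one implication and directly yields $\delta(Y,\Delta_Y)\leq\min_i\delta(X_i,\Delta_i)$, which is also the inequality needed in the paper's Theorem~\ref{product formula}; the paper's basis-type-divisor argument, on the other hand, is the one it recycles verbatim in the proof of Theorem~\ref{product formula}, where it delivers the comparison at the level of $\delta_m$ for every $m$. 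The only point you should make explicit is the justification of the volume product formula for non-nef classes (K\"unneth plus passage to the limit over divisible $m$); once that is recorded, your proof is complete.
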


\begin{proof}
The only if direction is easy. Suppose $(X_1\times X_2, p_1^*\Delta_1+p_2^*\Delta_2)$ is K-semistable. For any $m$-basis type divisor $D^{(i)}_m$ for $(X_i, \Delta_i)$, we see that $p_1^*D^{(1)}_m+p_2^*D^{(2)}_m$ is an $m$-basis type divisor for $(X_1\times X_2, p_1^*\Delta_1+p_2^*\Delta_2)$. Denote $\delta_m:=\delta_m(X_1\times X_2, p_1^*\Delta_1+p_2^*\Delta_2)$, then the pair
 $$(X_1\times X_2, p_1^*\Delta_1+p_2^*\Delta_2+\delta_m(p_1^*D^{(1)}_m+p_2^*D^{(2)}_m))$$
 is log canonical, so are  $(X_1,\Delta_1+\delta_m D^{(1)}_m)$ and $(X_2,\Delta_2+\delta_m D^{(2)}_m)$. Thus $\delta_m(X_i,\Delta_i)\geq \delta_m$, which implies that both $(X_i,\Delta_i),i=1,2,$ are K-semistable.

For the converse direction, we just apply Theorem \ref{kus} to the case $\mu=1$.  For any fixed rational $0<\epsilon <1$, there exist $D_i\in |-K_{X_i}-\Delta_i|_\bQ, i=1,2$ such that the pairs 
$$(X_1,\Delta_1+(1-\epsilon)D_1) \quad and \quad (X_2,\Delta_2+(1-\epsilon)D_2)$$
are both uniformly K-stable by Theorem \ref{kus} (3). By \cite{LTW19}, one can construct KE metrics on the pairs $(X_i, \Delta_i+(1-\epsilon)D_i), i=1,2,$. Therefore, by taking product, the pair
$$(X_1\times X_2, p_1^*\Delta_1+p_2^*\Delta_2+(1-\epsilon)(p_1^*D_1+p_2^*D_2))$$
also admits a KE metric, hence is K-semistable (even K-polystable) by \cite{Ber16}. Again by Theorem \ref{kus} (2), we see the pair $(X_1\times X_2, p_1^*\Delta_1+p_2^*\Delta_2)$ is K-semistable. The proof is finished.
\end{proof}

If we assume one of the two pairs is K-unstable, then we have following more precise result on delta invariant of the product variety.

\begin{theorem}\label{product formula}
Let $(X_i,\Delta_i), i=1,2,$ be two log Fano pairs. Suppose one of the two pairs is K-unstable, then 
$$\delta(X_1\times X_2, p_1^*\Delta_1+p_2^*\Delta_2)=\min\{\delta(X_1,\Delta_1), \delta(X_2, \Delta_2)\}.$$
\end{theorem}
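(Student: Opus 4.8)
\emph{Plan.} I would prove the two inequalities
$\delta(X_1\times X_2, p_1^*\Delta_1+p_2^*\Delta_2)\le \min\{\delta(X_1,\Delta_1),\delta(X_2,\Delta_2)\}$ and the reverse $\ge$ separately. Write $\delta_i:=\delta(X_i,\Delta_i)$, $\mu:=\min\{\delta_1,\delta_2\}$, and $L_i:=-K_{X_i}-\Delta_i$, $L:=-K_{X_1\times X_2}-p_1^*\Delta_1-p_2^*\Delta_2=p_1^*L_1+p_2^*L_2$. The hypothesis that one pair is K-unstable says exactly that $\mu<1$, and this is the only place the hypothesis enters: it is what allows us to invoke Theorem \ref{kus}, whose standing assumption is $0<\mu<1$.

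For the upper bound I would pull back valuations from a single factor. Given a prime divisor $E$ over $X_1$, extracted on a model $f:Y_1\to X_1$, consider $\hat E:=E\times X_2$ as a prime divisor over $X_1\times X_2$ via $Y_1\times X_2\to X_1\times X_2$. Since $K_{Y_1\times X_2}$ and the pullback of $K_{X_1\times X_2}+p_1^*\Delta_1+p_2^*\Delta_2$ both split as sums of pullbacks from the two factors, the discrepancy computation factors through the first factor and gives $A_{X_1\times X_2,\,p_1^*\Delta_1+p_2^*\Delta_2}(\hat E)=A_{X_1,\Delta_1}(E)$. For the $S$-invariant I would use the Fubini-type multiplicativity of volumes on a product, $\vol(q_1^*M_1+q_2^*M_2)=\binom{n_1+n_2}{n_1}\vol(M_1)\vol(M_2)$ with $n_i=\dim X_i$ and $q_i$ the projections of $Y_1\times X_2$; applying this to $M_1=f^*L_1-tE$, $M_2=L_2$ and to $L$ itself, the binomial factor and $\vol(L_2)$ cancel, yielding $S_{X_1\times X_2}(\hat E)=S_{X_1,\Delta_1}(E)$. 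Hence the ratio $A/S$ is unchanged, and taking the infimum over $E$ over $X_1$ (and symmetrically over $X_2$) gives $\delta(X_1\times X_2,\dots)\le\min\{\delta_1,\delta_2\}$; this bound needs no stability hypothesis.

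For the lower bound I would feed the ``adding a general boundary'' machinery into the product construction. Since $\delta_i\ge\mu$ and $0<\mu<1$, Theorem \ref{kus} (1)$\Rightarrow$(3) provides, for each rational $0<\epsilon<\mu$, elements $D_i\in|L_i|_\bQ$ such that $(X_i,\Delta_i+(1-(\mu-\epsilon))D_i)$ is uniformly K-stable. Setting $D:=p_1^*D_1+p_2^*D_2\in|L|_\bQ$, the boundary $p_1^*\Delta_1+p_2^*\Delta_2+(1-(\mu-\epsilon))D$ decomposes as the product boundary $p_1^*(\Delta_1+(1-(\mu-\epsilon))D_1)+p_2^*(\Delta_2+(1-(\mu-\epsilon))D_2)$. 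Exactly as in the proof of Theorem \ref{kss product}, each factor admits a (conic) KE metric by \cite{LTW19}, the product metric is again KE, and by \cite{Ber16} the product pair is K-semistable. Since this holds for every rational $0<\epsilon<\mu$ with $D$ allowed to depend on $\epsilon$, Theorem \ref{kus} (2)$\Rightarrow$(1) applied to the product pair yields $\delta(X_1\times X_2,\dots)\ge\mu$. Combining the two bounds finishes the proof.

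The main obstacle I anticipate is making the upper-bound computation fully rigorous: one must justify the volume Fubini identity across the whole range of $t$, where $f^*L_1-tE$ degenerates from big down to merely pseudoeffective, so that $S_{X_1\times X_2}(\hat E)=S_{X_1,\Delta_1}(E)$ holds without boundary effects in the integral. The lower bound, by contrast, is essentially a repackaging of Theorem \ref{kss product}; the only new bookkeeping is carrying the coefficient $1-(\mu-\epsilon)$ in place of $1-\epsilon$, together with the observation that the K-unstable hypothesis is precisely what guarantees $\mu<1$ and hence the applicability of Theorem \ref{kus}.
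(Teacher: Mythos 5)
Your proposal is correct, and its lower bound is precisely the paper's argument: invoke Theorem \ref{kus} (1)$\Rightarrow$(3) on each factor, pass to the product via the KE-metric step of Theorem \ref{kss product} (\cite{LTW19} together with \cite{Ber16}), and conclude with Theorem \ref{kus} (2)$\Rightarrow$(1); your observation that K-instability of one factor is exactly what supplies $\mu<1$ matches the paper's (implicit) use of that hypothesis. Where you genuinely diverge is the upper bound. The paper works at the level of the approximants $\delta_m$: it takes arbitrary $m$-basis type divisors $D_m^{(i)}$ on the factors, notes that $p_1^*D_m^{(1)}+p_2^*D_m^{(2)}$ is an $m$-basis type divisor on the product, uses log canonicity of the product pair with coefficient $\delta_m$ to force log canonicity of each factor pair, obtains $\delta_m(X_1,\Delta_1)\geq \delta_m(X_1\times X_2,\cdot)$, and then lets $m\to\infty$ via the convergence $\delta_m\to\delta$ of \cite{FO18,BJ20}. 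You instead pull back a divisorial valuation: for $E$ over $X_1$ you set $\hat E=E\times X_2$ and show $A(\hat E)=A_{X_1,\Delta_1}(E)$ and $S(\hat E)=S_{X_1,\Delta_1}(E)$ directly. The volume identity you flag as the main obstacle is not actually a gap: for $\bQ$-Cartier $M_i$ the K\"unneth formula gives $h^0\bigl(m(q_1^*M_1+q_2^*M_2)\bigr)=h^0(mM_1)\,h^0(mM_2)$, whence $\vol(q_1^*M_1+q_2^*M_2)=\binom{n_1+n_2}{n_1}\vol(M_1)\vol(M_2)$, and this extends to all real $t$ in the $S$-integral by continuity of the volume function, with both sides vanishing simultaneously beyond the pseudoeffective threshold because $\vol(L_2)>0$. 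Your route buys a purely valuation-theoretic proof of $\delta(X_1\times X_2,\cdot)\leq\min\{\delta_1,\delta_2\}$ that bypasses basis-type divisors and the approximation $\delta_m\to\delta$ altogether; the paper's route avoids any volume computation and yields the slightly finer level-$m$ statement $\delta_m(X_1\times X_2,\cdot)\leq\min_i\delta_m(X_i,\Delta_i)$, needing only the elementary fact that log canonicity of a product pair descends to each factor.
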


\begin{proof}
We assume $\delta(X_1,\Delta_1)\leq \delta(X_2, \Delta_2)$. We first show  
$$\delta(X_1\times X_2, p_1^*\Delta_1+p_2^*\Delta_2)\geq \delta(X_1,\Delta_1).$$
We apply Theorem \ref{kus} (3). For any rational $0<\epsilon <\delta(X_1,\Delta_1)$, one can find an element $D_1\in |-K_{X_1}-\Delta_1|_\bQ$ and an element $D_2\in |-K_{X_2}-\Delta_2|_\bQ$ such that both $(X_i,\Delta_i+(1-\epsilon)D_i)$ are uniformly K-stable.  By Theorem \ref{kss product}, the pair
$$(X_1\times X_2, p_1^*\Delta_1+p_2^*\Delta_2+(1-\epsilon)(p_1^*D_1+p_2^*D_2)) $$
is K-semistable. By Theorem \ref{kus} (2) we see that
$$\delta(X_1\times X_2, p_1^*\Delta_1+p_2^*\Delta_2)\geq \delta(X_1,\Delta_1). $$
We next show  $\delta(X_1\times X_2, p_1^*\Delta_1+p_2^*\Delta_2)\leq \delta(X_1,\Delta_1).$ It suffices to show that 
$$\delta_m(X_1\times X_2, p_1^*\Delta_1+p_2^*\Delta_2)\leq \delta_m(X_1,\Delta_1). $$
Similar to the proof of Theorem \ref{kss product}, we arbitrarily choose $m$-basis type divisors $D^{(i)}_m$ for $(X_i,\Delta_i)$, then $p_1^*D^{(1)}_m+p_2^*D^{(2)}_m$ is an $m$-basis type divisor for $(X_1\times X_2, p_1^*\Delta_1+p_2^*\Delta_2)$. Denote $\delta_m:=\delta_m(X_1\times X_2, p_1^*\Delta_1+p_2^*\Delta_2)$, we see the pair
$$(X_1\times X_2, p_1^*\Delta_1+p_2^*\Delta_2+\delta_m(p_1^*D^{(1)}_m+p_2^*D^{(2)}_m)) $$
is log canonical, so are the pairs $(X_i,\Delta_i+\delta_mD^{(i)}_m), i=1,2.$ Therefore we have
$$\delta_m(X_1,\Delta_1)\geq \delta_m. $$
Taking the limit we have
$$\delta(X_1,\Delta_1)\geq \delta(X_1\times X_2, p_1^*\Delta_1+p_2^*\Delta_2). $$
The proof is finished.

\end{proof}

\begin{proof}[Proof of Theorem \ref{corollary}]
The proof is just a combination of Theorem \ref{kss product} and Theorem \ref{product formula}.
\end{proof}

\bibliography{reference.bib}

@article{FO18,
	Author = {Fujita, Kento and Odaka, Yuji},
	Doi = {10.2748/tmj/1546570823},
	Fjournal = {The Tohoku Mathematical Journal. Second Series},
	Issn = {0040-8735},
	Journal = {Tohoku Math. J. (2)},
	Mrclass = {32Q26 (14J45 53C55)},
	Mrnumber = {3896135},
	Number = {4},
	Pages = {511--521},
	Title = {On the {K}-stability of {F}ano varieties and anticanonical divisors},
	Url = {https://doi.org/10.2748/tmj/1546570823},
	Volume = {70},
	Year = {2018},
	Bdsk-Url-1 = {https://doi.org/10.2748/tmj/1546570823},
	Bdsk-Url-2 = {http://dx.doi.org/10.2748/tmj/1546570823}}

@article{LTW19,
	Adsnote = {Provided by the SAO/NASA Astrophysics Data System},
	Adsurl = {https://ui.adsabs.harvard.edu/abs/2019arXiv190301215L},
	Archiveprefix = {arXiv},
	Author = {{Li}, Chi and {Tian}, Gang and {Wang}, Feng},
	Eid = {arXiv:1903.01215},
	Eprint = {1903.01215},
	Journal = {arXiv e-prints},
	Keywords = {Mathematics - Differential Geometry, Mathematics - Algebraic Geometry},
	Month = {Mar},
	Pages = {arXiv:1903.01215},
	Primaryclass = {math.DG},
	Title = {{The uniform version of Yau-Tian-Donaldson conjecture for singular Fano varieties}},
	Year = {2019}}

@article{BL18b,
	Adsnote = {Provided by the SAO/NASA Astrophysics Data System},
	Adsurl = {https://ui.adsabs.harvard.edu/abs/2018arXiv180809070B},
	Archiveprefix = {arXiv},
	Author = {{Blum}, Harold and {Liu}, Yuchen},
	Eid = {arXiv:1808.09070},
	Eprint = {1808.09070},
	Journal = {arXiv e-prints},
	Keywords = {Mathematics - Algebraic Geometry, Mathematics - Differential Geometry, 14J10, 14J45, 32Q20, 14J17},
	Month = {Aug},
	Pages = {arXiv:1808.09070},
	Primaryclass = {math.AG},
	Title = {{Openness of uniform K-stability in families of $\mathbb{Q}$-Fano varieties}},
	Year = {2018}}

@article{Der16,
	Author = {Dervan, Ruadha\'{\i}},
	Doi = {10.1093/imrn/rnv291},
	Fjournal = {International Mathematics Research Notices. IMRN},
	Issn = {1073-7928},
	Journal = {Int. Math. Res. Not. IMRN},
	Mrclass = {32Q26 (32Q15 53C55)},
	Mrnumber = {3564626},
	Mrreviewer = {Eleonora Di Nezza},
	Number = {15},
	Pages = {4728--4783},
	Title = {Uniform stability of twisted constant scalar curvature {K}\"{a}hler metrics},
	Url = {https://doi.org/10.1093/imrn/rnv291},
	Year = {2016},
	Bdsk-Url-1 = {https://doi.org/10.1093/imrn/rnv291},
	Bdsk-Url-2 = {http://dx.doi.org/10.1093/imrn/rnv291}}

@article{BLZ19,
	Adsnote = {Provided by the SAO/NASA Astrophysics Data System},
	Adsurl = {https://ui.adsabs.harvard.edu/abs/2019arXiv190705399B},
	Archiveprefix = {arXiv},
	Author = {{Blum}, Harold and {Liu}, Yuchen and {Zhou}, Chuyu},
	Eid = {arXiv:1907.05399},
	Eprint = {1907.05399},
	Journal = {arXiv e-prints},
	Keywords = {Mathematics - Algebraic Geometry, Mathematics - Differential Geometry, 14J45, 14J10, 32Q20},
	Month = {Jul},
	Pages = {arXiv:1907.05399},
	Primaryclass = {math.AG},
	Title = {{Optimal destabilization of K-unstable Fano varieties via stability thresholds}},
	Year = {2019}}

@article{Ber16,
	Author = {Berman, Robert J.},
	Doi = {10.1007/s00222-015-0607-7},
	Fjournal = {Inventiones Mathematicae},
	Issn = {0020-9910},
	Journal = {Invent. Math.},
	Mrclass = {53C55 (14J45 53C25)},
	Mrnumber = {3461370},
	Mrreviewer = {Cristiano Spotti},
	Number = {3},
	Pages = {973--1025},
	Title = {K-polystability of {${\Bbb Q}$}-{F}ano varieties admitting {K}\"{a}hler-{E}instein metrics},
	Url = {https://doi.org/10.1007/s00222-015-0607-7},
	Volume = {203},
	Year = {2016},
	Bdsk-Url-1 = {https://doi.org/10.1007/s00222-015-0607-7},
	Bdsk-Url-2 = {http://dx.doi.org/10.1007/s00222-015-0607-7}}

@article {zhuang19,
    AUTHOR = {Zhuang, Ziquan},
     TITLE = {Product theorem for {K}-stability},
   JOURNAL = {Adv. Math.},
  FJOURNAL = {Advances in Mathematics},
    VOLUME = {371},
      YEAR = {2020},
     PAGES = {107250, 18},
      ISSN = {0001-8708},
   MRCLASS = {14E30 (14J45)},
  MRNUMBER = {4108221},
       DOI = {10.1016/j.aim.2020.107250},
       URL = {https://doi.org/10.1016/j.aim.2020.107250},
}

@article{BJ20,
	Author = {Blum, Harold and Jonsson, Mattias},
	Doi = {10.1016/j.aim.2020.107062},
	Fjournal = {Advances in Mathematics},
	Issn = {0001-8708},
	Journal = {Adv. Math.},
	Mrclass = {14C20 (14M25)},
	Mrnumber = {4067358},
	Pages = {107062, 57},
	Title = {Thresholds, valuations, and {K}-stability},
	Url = {https://doi.org/10.1016/j.aim.2020.107062},
	Volume = {365},
	Year = {2020},
	Bdsk-Url-1 = {https://doi.org/10.1016/j.aim.2020.107062},
	Bdsk-Url-2 = {http://dx.doi.org/10.1016/j.aim.2020.107062}}
\end{document}